\documentclass[11pt]{amsart}
\usepackage{graphicx}
\usepackage{amssymb}
\usepackage{mathtools}
\usepackage[alphabetic]{amsrefs}
\usepackage{tikz}
\usepackage{tikz-3dplot}
\usepackage{float}
\usepackage{fullpage}
\setlength{\footskip}{30pt}

\title{A Cayley--Menger formula for the earth mover's simplex}

\author{William Q. Erickson}
\address{
William Q.~Erickson\\
Department of Mathematics\\
Baylor University \\ 
One Bear Place \#97328\\
Waco, TX 76798} 
\email{Will\_Erickson@baylor.edu}

\theoremstyle{plain}
\newtheorem{theorem}{Theorem}[section]
\newtheorem{prop}[theorem]{Proposition}

\newtheorem{cor}[theorem]{Corollary}

\theoremstyle{definition}
\newtheorem{definition}[theorem]{Definition}
\newtheorem{rem}[theorem]{Remark}
\newtheorem{example}[theorem]{Example}
\newtheorem{problem}[theorem]{Problem}

\newcommand{\F}{\mathcal{F}}
\renewcommand{\H}{\mathcal{H}}
\newcommand{\EMD}{{\rm EMD}}
\newcommand{\X}{\mathcal{X}}
\newcommand{\smtriangle}{\,\scalebox{.75}{$\triangle$} \,}
\newcommand{\Min}{{\rm Min}}
\newcommand{\Med}{{\rm Med}}
\newcommand{\Maj}{{\rm Maj}}
\newcommand{\Vol}{{\rm Vol}}

\subjclass[2020]{Primary 52B05; Secondary 62R01}

\keywords{Earth mover's distance, abstract simplices, generalized volume formulas}

\begin{document}

\begin{abstract}
   The earth mover's distance (EMD) is a well-known metric on spaces of histograms; roughly speaking, the EMD measures the minimum amount of work required to equalize two histograms.
   The EMD has a natural generalization that compares an arbitrary number of histograms; in this case, the EMD can be viewed as hypervolume in $d$ dimensions, where the histograms are vertices of a $d$-simplex.
   For $d=2$, it is known that the EMD between three histograms equals half the sum of the pairwise EMDs --- a sort of Heron's formula for histograms, but where the area equals the semiperimeter.
   In this paper, by introducing an object we call the earth mover's simplex, we prove two generalizations of this Heron-like formula in arbitrary dimension: 
   the first (a sort of Cayley--Menger formula) expresses the EMD in terms of the edge lengths (the pairwise EMDs), the second in terms of the facets (EMDs excluding one histogram).
    
\end{abstract}

\maketitle

\section{Introduction}

\subsection{Cayley--Menger formula for geometric simplices}

Among the more famous results of antiquity is \emph{Heron's formula} for the area of a triangle $\triangle ABC$ in terms of its side lengths $a,b,c$:
\begin{equation}
    \label{Heron}
    \text{area of }\triangle ABC = \sqrt{s(s-a)(s-b)(s-c)},
\end{equation}
where $s = (a+b+c)/2$ is the semiperimeter.
Although named for Heron of Alexandria, who recorded it in his \textit{Metrica} in the first century, this result was likely known much earlier, perhaps even to Archimedes.

A natural question is whether Heron's formula generalizes to a formula for (hyper)volume in higher dimensions;
the answer depends on what exactly we wish to generalize, since in the original formula~\eqref{Heron} in two dimensions, the side lengths $a,b,c$ have both dimension 1 \emph{and} codimension 1.
On one hand, if we view them as having dimension 1, then the problem is to express the volume of a $d$-dimensional simplex $\Delta$ in terms of its edge lengths (i.e., the volumes of its 1-dimensional faces).
The answer to this problem is the~\emph{Cayley--Menger determinant} below:
\begin{equation}
    \label{CM}
    \text{volume of } \Delta = \frac{(-1)^{d+1}}{(d!)^2 2^d} \cdot \det \! \big[\ell^2_{ij}\big]_{i,j=0}^{d+1},
\end{equation}
where $\ell_{ij}$ is the length of the edge between the $i$th and $j$th vertices (and $\ell_{i,d+1} = \ell_{d+1,i} \coloneqq 1$ for all $i \leq d$, and $\ell_{d+1,d+1} \coloneqq 0$).  
On the other hand, if we view the sides of a triangle as having \emph{co}dimension 1, then the question is whether the volume of $\Delta$ can be expressed only in terms of its ``surface area'' (i.e., the volumes of its facets).
In this case, the answer is no, since even for the tetrahedron it is clear that the four facet areas do not determine the volume.
Hence any generalization of Heron's formula with respect to the facets must require additional information.

\subsection{Statement of the problem}

The problem in this paper is to find an analogue of the Cayley--Menger formula~\eqref{CM}, in the very specific context of the \emph{earth mover's distance} (EMD).
The EMD is a metric on spaces of histograms, or probability distributions, and is essentially synonymous with the 1-Wasserstein distance and the Mallows distance~\cite{Bickel}: 
roughly speaking, the EMD measures the minimum amount of work required to equalize two histograms, where ``work'' is defined by the geometry of the feature space of the histograms.
Although outside the scope of the present paper, the EMD can be viewed as the solution to the \emph{transportation problem} (the founding problem of optimal transport theory), and is central to a burgeoning range of important applications in mathematics, physics, and the social sciences; see Villani's comprehensive reference~\cite{Villani} for further details.

In this paper, we consider histograms in which the bins are the integers $1, \ldots, n$ on the number line.
It is natural to generalize the EMD in order to compare an arbitrary number of histograms $h_0, \ldots, h_d$, rather than only two at a time:
in this case, the EMD measures the minimum amount of work required to equalize all $d+1$ histograms.
In the special case $d=2$, it was observed~\cite{Erickson20}*{Prop.~5} that there is a simple linear relationship between the EMD of three histograms, on one hand, and the three pairwise EMDs, on the other hand:
\begin{equation}
    \label{observation 3 and 2}
    \EMD(h_0, h_1, h_2) = \frac{1}{2}\big[ \EMD(h_1, h_2) + \EMD(h_0, h_2) + \EMD(h_0, h_1)\big].
\end{equation}
This identity instantly reminds one of Heron's formula, where the histograms $h_i$ are vertices of a triangle, their EMD is the area, and the pairwise EMDs are the side lengths.
In this EMD setting, however, the Heron analogue~\eqref{observation 3 and 2} is much simpler than the Euclidean version~\eqref{Heron}, since in~\eqref{observation 3 and 2} the area simply equals the semiperimeter.
It is also shown in~\cite{Erickson20} that a simple formula of the kind in~\eqref{observation 3 and 2}, expressing overall EMD exclusively in terms of the pairwise EMDs, cannot exist for $d>2$.
The purpose of this paper, then, is to find the correct generalization of~\eqref{observation 3 and 2} in arbitrary dimensions.

\subsection{Overview of results}

As our primary tool, we introduce a combinatorial object $\Delta$ we call the \emph{earth mover's (EM) simplex} (see Definition~\ref{def:EM simplex}).
We endow $\Delta$ with a ``volume'' in such a way that, if the vertices of $\Delta$ are taken to be the cumulative histograms of $h_0, \ldots, h_d$, then the volume of $\Delta$ equals $\EMD(h_0, \ldots, h_1)$.
In this way, we translate the problem into that of expressing the volume of $\Delta$ in terms of its edge lengths.
In fact, this ``volume'' is merely the first in a sequence of generalized volumes we define on $\Delta$; it turns out that the second generalized volume measures the failure of the obvious analogue of~\eqref{observation 3 and 2}.
Hence our main result (Theorem~\ref{thm:CM}), a sort of Cayley--Menger formula for EM simplices, takes the especially nice form
\[
    \Vol(\Delta) = \frac{1}{d} \Bigg( \Vol_2(\Delta) + \sum_{\mathclap{\substack{\textup{edges $\mathcal{E}$} \\ \textup{of $\Delta$}}}} \Vol(\mathcal{E}) \Bigg).
\]
We also prove a second volume formula for EM simplices (Theorem~\ref{thm:Vol from SA}),  this time in terms of the surface area ${\rm SA}(\Delta)$:
\[
        \Vol(\Delta) = 
        \frac{1}{d} \left( {\rm{SA}}(\Delta) + \frac{d+1}{2} \cdot |\Med(\Delta)| \right),
    \]
    where $|\Med(\Delta)|$ is the number of elements contained in exactly half of the vertices (which, in an EM simplex, are themselves sets).
    We point out that in even dimensions, we automatically have $|\Med(\Delta)| = 0$; we find it interesting that the dimensional parity affects the EMD in this way, since a similar effect has been observed with regard to its expected value~\cite{Erickson20}*{\S5.4}.

    In Section~\ref{sec:Examples} we present a fully illustrated example of both main theorems, which may be helpful in understanding the definition of an EM simplex.
    We are also hopeful that the results in this paper might suggest a direction for future research by experts in topological data analysis, using the tools of persistent homology; see our closing remarks in Section~\ref{sec:Conclusion}.

\section{The earth mover's distance}

\subsection{EMD between two histograms}

Classically, the \emph{earth mover's distance} (EMD) measures the similarity between two histograms: the EMD is defined to be the minimum amount of work required to transform one histogram into the another.
In this paper, the histograms we consider have bins $1, \ldots, n$, and some fixed number $m$ of data points.
We define one unit of work to be the work required to move one data point by one bin.
We denote a histogram by a lower-case $h = (h(1), \ldots, h(n))$, where $h(i)$ is the number of data points in bin $i$.
We denote the cumulative histogram of $h$ by an upper-case $H = (H(1), \ldots, H(n))$, where  $H(j) = \sum_{i=1}^j h(i)$.
It will be convenient for us to identify $H$ with the set of dots in its dot diagram, as in the following example:
\begin{equation}
    \label{h example}
h = (3,0,1,4,2), \qquad H = (3,3,4,8,10) = 
\begin{tikzpicture}[scale=.2, baseline=(current bounding box.center)]
  \foreach \x/\y in {1/1, 1/2, 1/3, 2/1, 2/2, 2/3, 3/1, 3/2, 3/3, 3/4, 4/1, 4/2, 4/3, 4/4, 4/5, 4/6, 4/7, 4/8, 5/1, 5/2, 5/3, 5/4, 5/5, 5/6, 5/7, 5/8, 5/9, 5/10} {
    \fill (\x, \y) circle (10pt);
  }
  
  \draw[->] (0,0) -- (6,0) node[right] {};
  \draw[->] (0,0) -- (0,10) node[above] {};
  
\end{tikzpicture}
\end{equation}
Note that in the language of combinatorics, each histogram can be identified with a unique \emph{(weak) composition} of $m$ into $n$ parts, while a cumulative histogram is a \emph{partition} of $\sum_i H(i)$ into $n$ parts.
It is customary to identify a partition with its \emph{Ferrers diagram}, which is essentially the dot diagram here (up to rotation by 180 degrees).
By ignoring the last column of $H$, which always contains $m$ dots, it is easy to see \cite{Erickson23}*{\S3} that the set of cumulative histograms is in bijection with the set of Ferrers diagrams fitting inside an $m \times (n-1)$ rectangle.

Given two histograms $h_0, h_1$, it is well known that $\EMD(h_0, h_1)$ equals the $\ell_1$-norm of $H_0 - H_1$, as a difference of functions on $\{1, \ldots, n\}$.  
(See ~\cite{Rabin}*{eqn.~(8) and Fig.~1}; it follows that the EMD is a true metric on the space of histograms.)
From the perspective of~\cite{Erickson23}, this means that 
\begin{equation}
\label{EMD is symm diff d=2}
    \EMD(h_0, h_1) = | H_0 \smtriangle H_1 |,
\end{equation}
where $H_0 \smtriangle H_1 \coloneqq (H_0 \cup H_1) \setminus (H_0 \cap H_1) = (H_0 \setminus H_1) \cup (H_1 \setminus H_0)$ denotes the \emph{symmetric difference}.
In words, the EMD is the number of dots occurring in exactly one of the two cumulative histograms;
see the example in Figure~\ref{fig:example EMD d2}.

\begin{figure}[ht]
    \centering
    \begin{tikzpicture}[scale=.2]
  \foreach \x/\y in {1/1, 1/2, 1/3, 2/1, 2/2, 2/3, 3/1, 3/2, 3/3, 3/4, 4/1, 4/2, 4/3, 4/4, 4/5, 4/6, 4/7, 4/8, 5/1, 5/2, 5/3, 5/4, 5/5, 5/6, 5/7, 5/8, 5/9, 5/10} {
    \fill[red!40!gray] (\x, \y) circle (10pt);
  }
  
  \draw[->] (0,0) -- (6,0) node[right] {};
  \draw[->] (0,0) -- (0,10) node[above] {};
  
\node [anchor=east] at (current bounding box.west) {$H_0=$};
\node [anchor=south] at (current bounding box.north) {$h_0 = (3,0,1,4,2)$};
  
\end{tikzpicture}
\qquad
\begin{tikzpicture}[scale=.2]
  \foreach \x/\y in {1/1, 2/1, 2/2, 2/3, 2/4, 2/5, 3/1, 3/2, 3/3, 3/4, 3/5, 3/6, 4/1, 4/2, 4/3, 4/4, 4/5, 4/6, 4/7, 5/1, 5/2, 5/3, 5/4, 5/5, 5/6, 5/7, 5/8, 5/9, 5/10} {
    \fill[blue!40!gray] (\x, \y) circle (10pt);
  }
  
  \draw[->] (0,0) -- (6,0) node[right] {};
  \draw[->] (0,0) -- (0,10) node[above] {};

\node [anchor=east] at (current bounding box.west) {$H_1=$};
\node [anchor=south] at (current bounding box.north) {$h_1 = (1,4,1,1,3)$};
  
\end{tikzpicture}
\qquad
\begin{tikzpicture}[scale=.2]
  \foreach \x/\y in {1/2, 1/3, 4/8} {
    \fill[red!40!gray] (\x, \y) circle (10pt);
    }
  \foreach \x/\y in {2/4, 2/5, 3/5, 3/6} {
    \fill[blue!40!gray] (\x, \y) circle (10pt);
    }
  \foreach \x/\y in {1/1,2/1,2/2,2/3,3/1,3/2,3/3,3/4,4/1,4/2,4/3,4/4,4/5,4/6,4/7,5/1,5/2,5/3,5/4,5/5,5/6,5/7,5/8,5/9,5/10} {
    \draw[gray] (\x, \y) circle (10pt);
  }
  
  \draw[->] (0,0) -- (6,0) node[right] {};
  \draw[->] (0,0) -- (0,10) node[above] {};

\node [anchor=east] at (current bounding box.west) {$H_0 \smtriangle H_1=$};
\node [anchor=south] at (current bounding box.north) {$\EMD(h_0, h_1) = 7$};
  
\end{tikzpicture}
    \caption{The EMD between histograms $h_0$ and $h_1$ is the size of the symmetric difference $H_0 \smtriangle H_1$. 
    In the right-hand picture, the unfilled dots are not elements of the symmetric difference, since they are in the intersection $H_0 \cap H_1$.
    The minimum of 7 units of work can be attained by transporting the following data points in $h_0$: move two points from bin 1 to bin 2, move one point from bin 3 to bin 2, one from 4 to 2 (which expends two units of work), one from 4 to 3, and one from 4 to 5.}
    \label{fig:example EMD d2}
\end{figure}



\subsection{Generalized EMD}

The EMD has a natural generalization for any number of histograms, rather than only two at a time.
(See, for example,~\cite{Bein} and~\cite{Kline} for computational treatments of the higher-dimensional earth mover's problem in greater generality.)
Given histograms $h_0, \ldots, h_d$, one simply defines $\EMD(h_0, \ldots, h_d)$ to be the minimum amount of work required to transform all of the $h_i$ into a common histogram (allowing data points to be transported within each histogram).

In order to extend the conceptual method of Figure~\ref{fig:example EMD d2} beyond the classical case $d=1$, we require the correct generalization of the symmetric difference.
It turns out that the key property of the symmetric difference (with regard to the EMD) was this: 
for each dot $x \in H_1 \cup H_2$, the symmetric difference measures how far away $x$ is from either belonging to \emph{none} of the $H_i$ or belonging to \emph{all} of the $H_i$.
When $d=1$ this is a binary measurement: if $x$ belongs to neither/both of the $H_i$, then it appears 0 times in the symmetric difference, and otherwise it appears 1 time in the symmetric difference.
For arbitrary $d$, however, generalizing this measurement requires the symmetric difference to be a \emph{multi}set:
the multiplicity of each dot $x$ is either the number of $H_i$ containing $x$, or the number of $H_i$ not containing $x$, whichever is smaller.

Throughout the paper, we write $\uplus$ to denote the union of multisets, and we write $x^m$ to denote a multiset element with multiplicity $m$.  
If $\X$ is a family of sets, then for each $x \in \bigcup_{X \in \X} X$, its \emph{degree}, written as $\deg_\X(x)$, is the number of sets $X \in \X$ such that $x \in X$.

\begin{definition}[\cite{Erickson23}*{Def.~7.5}]
\label{def:GSD}
    Let $\X = \{X_0, \ldots, X_d\}$ be a family of sets.
    The \emph{generalized symmetric difference} is the multiset
    \[
    \blacktriangle(\X) \coloneqq \biguplus_{k=0}^{d+1} \{ x^{\min\{k, \: d-k+1\}} \mid \deg_{\X}(x) = k \}.
    \]
\end{definition}

\begin{prop}[\cite{Erickson23}*{Prop.~7.7}]
    \label{prop:EMC def original}
    Let $h_0, \ldots, h_d$ be histograms, and let $\H = \{H_0, \ldots, H_d\}$ be the family of cumulative histograms.
    Then
    \[
        \EMD(h_0, \ldots, h_d) = |\blacktriangle(\H)|.
    \]
\end{prop}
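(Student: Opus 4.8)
The plan is to reduce the generalized EMD to a single‑variable minimization, solve that minimization column by column, and then match the answer against $\blacktriangle(\H)$ dot by dot.

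First I would unwind the definition. Transforming $h_0,\dots,h_d$ into a common histogram $g$ costs $\sum_{i=0}^d \EMD(h_i,g)$, since the data points of each $h_i$ are transported independently; combining this with the classical identity $\EMD(h,h')=\|H-H'\|_1$ recalled in \eqref{EMD is symm diff d=2} gives
\[
\EMD(h_0,\dots,h_d) \;=\; \min_{g}\ \sum_{i=0}^d \EMD(h_i,g) \;=\; \min_{G}\ \sum_{i=0}^d \|H_i - G\|_1 ,
\]
where $g$ ranges over histograms with $m$ data points in bins $1,\dots,n$ and, equivalently, $G$ ranges over cumulative histograms (nondecreasing integer functions on $\{1,\dots,n\}$ with $G(n)=m$). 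Identifying each $H_i$ and $G$ with its set of dots, one has $\|H_i - G\|_1 = |H_i \smtriangle G|$, so that, swapping the order of summation,
\[
\sum_{i=0}^d \|H_i - G\|_1 \;=\; \sum_{\textup{dots } x}\ \#\{\, i : x \in H_i \smtriangle G \,\}.
\]

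Next comes the dot‑counting step, which immediately yields the lower bound. Fix a dot $x$ with $k \coloneqq \deg_{\H}(x)$. If $x \in G$, then $x$ lies in exactly the $d+1-k$ of the $H_i$ that do \emph{not} contain it, so it contributes $d+1-k$ to the sum above; if $x \notin G$, it contributes $k$. Either way the contribution is at least $\min\{k,\,d-k+1\}$, so $\sum_{i}\|H_i - G\|_1 \ge |\blacktriangle(\H)|$ (by Definition~\ref{def:GSD}) for \emph{every} $G$, and hence $\EMD(h_0,\dots,h_d) \ge |\blacktriangle(\H)|$. Note that this half of the argument uses nothing about $G$ at all.

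It remains to exhibit a $G$ attaining the bound, and this is where I expect the only real work to be. The natural candidate is the coordinatewise lower median $G(j) \coloneqq v_{\lceil (d+1)/2\rceil}\big(H_0(j),\dots,H_d(j)\big)$, where $v_r$ denotes the $r$‑th smallest value. Two things must be checked. \textbf{(i)} \emph{$G$ is a genuine cumulative histogram.} It is integer‑valued, nonnegative, and satisfies $G(n)=m$ trivially; the point is monotonicity, which I would deduce from the order‑statistic fact that $v_r\big(H_0(j),\dots,H_d(j)\big)$ is nondecreasing in $j$ — this follows from the equivalence ``$v_r(a_0,\dots,a_d)\le c \iff \#\{i: a_i \le c\}\ge r$'' together with $H_i(j)\le H_i(j')$. (This admissibility is exactly what the lower‑bound paragraph did not need, but without it we could not conclude the bound is \emph{achievable}.) \textbf{(ii)} \emph{With this $G$, every dot contributes exactly $\min\{k,d-k+1\}$.} For a dot $x=(j,y)$ of degree $k$ we have $x\in G \iff y \le G(j)$, and by the choice of the lower median this holds precisely when $k > (d+1)/2$; a short check then shows the contribution ($d+1-k$ when $k>(d+1)/2$, and $k$ when $k\le(d+1)/2$) equals $\min\{k,d-k+1\}$ in every case. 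Summing over all dots gives $\sum_i\|H_i-G\|_1 = |\blacktriangle(\H)|$, hence $\EMD(h_0,\dots,h_d)\le |\blacktriangle(\H)|$, and combining with the previous paragraph completes the proof. Everything here is a reorganization of sums except for the median‑monotonicity lemma and the parity bookkeeping in (ii), which are the two points I would write out in full.
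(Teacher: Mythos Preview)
The paper does not actually prove this proposition: it is imported from \cite{Erickson23}*{Prop.~7.7} and used as a black box, so there is no in-paper argument to compare against. Your proposal therefore supplies what the present paper omits.

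As a standalone proof your argument is correct. The reduction $\EMD(h_0,\dots,h_d)=\min_G \sum_i \|H_i-G\|_1$ follows directly from the definition of the generalized EMD together with the classical $\ell_1$ identity, and rewriting this as a sum over dots gives the lower bound $\min\{k,d+1-k\}$ per dot with no hypothesis on $G$. For the matching upper bound, the coordinatewise lower median $G(j)=v_{\lceil(d+1)/2\rceil}\big(H_0(j),\dots,H_d(j)\big)$ is the right witness: your order-statistic argument shows $G$ is nondecreasing (hence a genuine cumulative histogram with $G(n)=m$), and the equivalence ``$x\in G \Longleftrightarrow \deg_{\H}(x)>(d+1)/2$'' --- which one verifies via $v_r\ge y \Longleftrightarrow k\ge d+2-r$ with $r=\lceil(d+1)/2\rceil$ --- makes every dot contribute exactly $\min\{k,d+1-k\}$ in both parities. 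The two points you single out (median monotonicity and the parity bookkeeping) are indeed the only places needing a written line; the rest is bookkeeping.
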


See Figure~\ref{fig:example EMC d3} for a toy example, which we will revisit in Section~\ref{sec:Examples}.

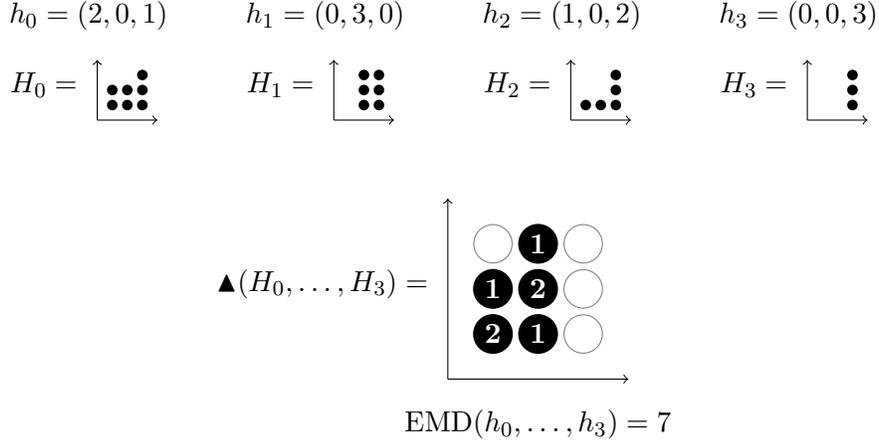
\begin{figure}[t]
    \centering
    \[
\begin{tikzpicture}[scale=.2]
  \foreach \x/\y in {1/1, 1/2, 2/1, 2/2, 3/1, 3/2, 3/3} {
    \fill (\x, \y) circle (10pt);
  }
  
  \draw[->] (0,0) -- (4,0) node[right] {};
  \draw[->] (0,0) -- (0,4) node[above] {};
  
\node [anchor=east] at (current bounding box.west) {$H_0=$};
\node [anchor=south] at (current bounding box.north) {$h_0 = (2,0,1)$};
  
\end{tikzpicture}
\qquad
\begin{tikzpicture}[scale=.2]
  \foreach \x/\y in {2/1, 2/2, 2/3, 3/1, 3/2, 3/3} {
    \fill (\x, \y) circle (10pt);
  }
  
  \draw[->] (0,0) -- (4,0) node[right] {};
  \draw[->] (0,0) -- (0,4) node[above] {};
  
\node [anchor=east] at (current bounding box.west) {$H_1=$};
\node [anchor=south] at (current bounding box.north) {$h_1 = (0,3,0)$};
  
\end{tikzpicture}
\qquad
\begin{tikzpicture}[scale=.2]
  \foreach \x/\y in {1/1, 2/1, 3/1, 3/2, 3/3} {
    \fill (\x, \y) circle (10pt);
  }
  
  \draw[->] (0,0) -- (4,0) node[right] {};
  \draw[->] (0,0) -- (0,4) node[above] {};
  
\node [anchor=east] at (current bounding box.west) {$H_2=$};
\node [anchor=south] at (current bounding box.north) {$h_2 = (1,0,2)$};
  
\end{tikzpicture}
\qquad
\begin{tikzpicture}[scale=.2]
  \foreach \x/\y in {3/1, 3/2, 3/3} {
    \fill (\x, \y) circle (10pt);
  }
  
  \draw[->] (0,0) -- (4,0) node[right] {};
  \draw[->] (0,0) -- (0,4) node[above] {};
  
\node [anchor=east] at (current bounding box.west) {$H_3=$};
\node [anchor=south] at (current bounding box.north) {$h_3 = (0,0,3)$};
  
\end{tikzpicture}
\]
    \[
\begin{tikzpicture}[scale=.6]
  \foreach \x/\y in {1/3, 3/1, 3/2, 3/3} {
    \node[circle,draw,gray,inner sep=2pt] at (\x, \y) {\phantom{0}};;
  }
  
  \foreach \x/\y in {1/2, 2/3, 2/1} {
    \node[circle,fill,inner sep=2pt,text=white] at (\x, \y) {\textbf{1}};
  }

  \foreach \x/\y in {1/1, 2/2} {
    \node[circle,fill,inner sep=2pt,, text=white] at (\x, \y) {\textbf{2}};;
  }
  
  \draw[->] (0,0) -- (4,0) node[right] {};
  \draw[->] (0,0) -- (0,4) node[above] {};

\node [anchor=east] at (current bounding box.west) {$\blacktriangle(H_0, \ldots, H_3) =$};
\node at (2,-1) {$\EMD(h_0, \ldots, h_3) = 7$};
  
\end{tikzpicture}
\]
    \caption{Example of the generalized EMD, where $d=n=m=3$.
    Using Definition~\ref{def:GSD}, we depict the multiset $\blacktriangle(H_0, \ldots, H_3)$ by labeling each dot with its multiplicity (where the 0's are unfilled dots). 
    In particular, the multiplicity of a dot is $0$, $1$, $2$, $1$, or $0$, depending on whether it is contained in $0$, $1$, $2$, $3$, or $4$ of the $H_i$, respectively.
    (Note that the middle dot in the bottom row has degree 3, and therefore its multiplicity is $\min\{3, \:1\} = 1$.
    Likewise, the dots in the third column have degree 4, and therefore multiplicity $\min\{4,0\} = 0$.)
    The EMD is then obtained via Proposition~\ref{prop:EMC def original}.
    The reader can check that one way to equalize the histograms with only 7 units of work is to transform each of them into $(0,2,1)$.
    }
    \label{fig:example EMC d3}
\end{figure}

\section{The earth mover's simplex}
\label{sec:EM simplex}

In this section, we introduce an abstract structure we call the \emph{earth mover's (EM) simplex}: 
this is a combinatorial simplex whose vertices are finite sets $X_i$, equipped with face labelings $\lambda, \lambda', \lambda'', \ldots$, to be defined below.
The labeling $\lambda'$ in particular induces a notion of ``volume'' that is compatible with the EMD, whenever the vertices are chosen to be cumulative histograms $H_i$.
Hence at the end of this section we will prove the motivating fact that $\EMD(h_0, \ldots, h_d)$ is precisely the volume of the EM simplex on the $H_i$.

\subsection{Standard terminology}
\label{sub:terminology}

Let $\Delta = \Delta(V)$ be the (abstract) $d$-simplex on the vertex set $V = \{v_0, \ldots, v_d\}$.
In other words, $\Delta$ is the power set of $V$, so that the elements (\emph{faces}) of $\Delta$ are simply the subsets $F \subseteq V$.
The \emph{dimension} of a face is one less than its cardinality: hence we write $\dim F \coloneqq \#F - 1$, and in particular we have $\dim \varnothing = -1$.
By definition, $\dim \Delta \coloneqq \dim V = d$.
The \emph{codimension} of a face is $\operatorname{codim} F \coloneqq d - \dim F$.
The 0-dimensional faces $\{v_i\}$ are called \emph{vertices}, and the 1-dimensional faces $\{v_i, v_j\}$ are called \emph{edges}.
The $(d-1)$-dimensional faces $\{v_0, \ldots, \widehat{v}_i, \ldots, v_d\}$ are called \emph{facets} of $\Delta$, and the missing $v_i$ is called the vertex \emph{opposite} the facet.

It is customary, when convenient, to view a face as the simplex it defines.
For example, we call $F$ a \emph{facet} of $G \in \Delta$ whenever $F \subset G$ with $\dim F = \dim G - 1$, although technically it would be more proper to call $F$ a facet of $\Delta(G)$.
In this case, we also say that $G$ is a \emph{cofacet} of $F$.
Clearly the number of facets of $G$ equals its cardinality $\#G$.

We denote the \emph{$k$-skeleton} of $\Delta$ by ${\rm skel}_k(\Delta) \coloneqq \{ F \in \Delta \mid \dim F \leq k\}$.
Finally, the \emph{link} of a face $F$ is the subsimplex of $\Delta$ containing all faces which are disjoint from $F$.
(This is a special case of the standard definition of link in simplicial complexes.)

\subsection{Face labelings}

In our upcoming definition of an EM simplex, the role of vertices $v_i$ will be played by finite sets $X_i$.
Hence let $\X = \{X_0, \ldots, X_d\}$ be a family of finite sets, and from now on, let $\Delta = \Delta(\X)$.
We use the shorthand $\cup\X \coloneqq \bigcup_{X \in \X} X$, and we partition $\cup\X$ into three subsets according to degree:
\begin{align*}
    \Min(\X) &\coloneqq \left\{x \in \cup\X \; \middle| \; \deg_{\X}(x) < \frac{d+1}{2} \right\}, \\
    \Med(\X) &\coloneqq \left\{x \in \cup\X \; \middle| \; \deg_{\X}(x) = \frac{d+1}{2} \right\}, \\
    \Maj(\X) &\coloneqq \left\{x \in \cup\X \; \middle| \; \deg_{\X}(x) > \frac{d+1}{2} \right\}.
\end{align*}
(These are the elements contained in the \emph{minority}, in the \emph{median} number, or in the \emph{majority} of the $X_i$, respectively.)
Note that if $d$ is even, then $\Med(\X) = \varnothing$.
Define a map $\epsilon$ which assigns to each $x \in \cup\X$ a face $\epsilon(x)$ in the ``half-skeleton'' of $\Delta$, according to the memberships of $x$:
\begin{align}
\label{epsilon}
\begin{split}
    \epsilon: \cup\X &\longrightarrow {\rm skel}_{\lfloor (d-1)/2 \rfloor}(\Delta),\\
    x &\longmapsto 
    \begin{cases}
        \{X \in \X \mid x \in X\}, & x \in \Min(\X) \cup \Med(\X),\\
        \{X \in \X \mid x \not\in X\}, & x \in \Maj(\X).
    \end{cases}
    \end{split}
\end{align}

We now define a sequence $\lambda^{(0)}, \lambda^{(1)}, \lambda^{(2)}, \ldots$ of face labelings, as follows.
Begin by defining $\lambda^{(0)} \coloneqq \epsilon^{-1}$, thereby labeling each face $\F \in {\rm skel}_{\lfloor (d-1)/2 \rfloor}(\Delta)$ by its fiber:
\begin{equation}
    \label{lambda}
    \lambda^{(0)}(\F) \coloneqq \{ x \in \cup\X \mid \epsilon(x) = \F \}.
\end{equation}
Then recursively define
\begin{equation}
    \label{lambda i}
    \lambda^{(i+1)}(\F) \coloneqq \biguplus_{\mathclap{\substack{\text{cofacets} \\ \text{$\mathcal{G}$ of $\F$}}}} \lambda^{(i)}(\mathcal{G}).
\end{equation}
Note that $\lambda^{(0)}(\F)$ and $\lambda^{(1)}(\F)$ are multiplicity-free (i.e., they are true sets), whereas for $i \geq 2$, the labels $\lambda^{(i)}(\F)$ can be multisets.
The reader may prefer to imagine constructing the labels $\lambda^{(i+1)}(\F)$ as follows: starting with a clean slate, choose a face $\mathcal{G}$, and add one copy of its previous label $\lambda^{(i)}(\mathcal{G})$ to each facet $\F$ of $\mathcal{G}$.
Doing this for all $\mathcal{G}$ where $\lambda^{(i)}(\mathcal{G})$ exists, the resulting multiset unions are precisely the new labels $\lambda^{(i+1)}(\F)$.
Hence, intuitively, one can regard each successive labeling as spreading $\#\mathcal{G}$ copies of the previous label of $\mathcal{G}$ across the boundary $\partial(\mathcal{G})$, for each face $\mathcal{G}$ in the appropriate skeleton.

Note that the domain of $\lambda^{(i)}$ is ${\rm skel}_{\lfloor (d-1)/2 \rfloor - i}(\Delta)$, and so the nonempty labelings are given by the finite sequence $\lambda^{(0)}, \ldots, \lambda^{(\lceil d/2 \rceil)}$.
From now on, we will use the familiar shorthand $\lambda \coloneqq \lambda^{(0)}$, $\lambda' \coloneqq  \lambda^{(1)}$, and $\lambda'' \coloneqq \lambda^{(2)}$.
These are the only labelings that play a role in this paper, and it will turn out that the notation is intentionally suggestive of derivatives.

\subsection{Volume of the EM simplex}

We now make the two central definitions of this paper:

\begin{definition}[EM simplex]
\label{def:EM simplex}
    Let $\X = (X_0, \ldots, X_d)$ be a family of finite sets.
    Then the \emph{earth mover's (EM) simplex} on $\X$ is the abstract $d$-simplex $\Delta = \Delta(\X)$, equipped with the face labelings $\lambda^{(i)}$ defined in~\eqref{lambda}--\eqref{lambda i}.
\end{definition}

\begin{definition}[Volume of an EM simplex]
\label{def:EM volume}
    Let $\Delta$ be an EM simplex.
    The $i$th \emph{generalized volume} of $\Delta$ is
    \[
        \label{volume}
        \Vol_i(\Delta) \coloneqq \sum_{\F} |\lambda^{(i)}(\F)|,
    \]
    where the nonzero summands range over the faces $\F \in {\rm skel}_{\lfloor(d-1)/2\rfloor -i}(\Delta)$.
    When $i=1$, we will suppress the subscript, and simply call $\Vol(\Delta) \coloneqq \Vol_1(\Delta)$ the \emph{volume} of $\Delta$.
\end{definition}

As mentioned in Section~\ref{sub:terminology}, each face $\F \in \Delta$ determines the EM simplex $\Delta(\F)$, and so for notational clarity we will write $\Vol(\F) \coloneqq \Vol(\Delta(\F))$.

\begin{example}[Volumes of EM simplices]\
\label{ex:volumes}
    
    \begin{enumerate}
    
    \item If $d=0$, then $\lambda^{(i)}$ is just the empty labeling for all $i > 0$.
    Hence $\Vol(\Delta) = 0$, just as we would expect for a 0-dimensional simplex.

    \item \label{ex: edge length fact} If $d=1$, then we have $\lambda(\{X_0\}) = X_0 \setminus X_1$ and $\lambda(\{X_1\}) = X_1 \setminus X_0$ and $\lambda(\varnothing) = X_0 \cap X_1$.
    Then the domain of $\lambda'$ is just ${\rm skel}_{-1}(\Delta) = \{\varnothing\}$, and we have 
    \begin{align*}
        \lambda'(\varnothing) &= (X_0 \setminus X_1) \cup (X_1 \setminus X_0) = X_0 \smtriangle X_1 \\
        \Vol(\Delta) &= | X_0 \smtriangle X_1 |.
    \end{align*}
    This fact about ``edge length'' is crucial to our main result (Theorem~\ref{thm:CM}).

    \item When $d=2$, the process above shows that $\Vol(\Delta) = \#\{x \in \cup\X \mid \deg_\X(x) = 1 \text{ or } 2 \} = |\blacktriangle(\X)|$.
    Note that $\Vol(\Delta)$ does not behave like Euclidean volume: for instance, in the situation $X_0 = X_1 \neq X_2$ where two vertices are equal, we always have nonzero $\Vol(\Delta) = |X_0 \smtriangle X_2|$.

    \item Notice that for $d \leq 2$, we had $\Vol_i(\Delta) = 0$ for all $i > 1$.
    See Section~\ref{sec:Examples} for a three-dimensional example where $\Vol_2(\Delta) \neq 0$.

    \item In any dimension $d$, if all the $X_i$ are identical, then the only nonempty $\lambda(\F)$ is $\lambda(\varnothing) = X_0 = \cdots = X_d$.
    Since $\varnothing$ has no facets, we have $\Vol(\Delta) = 0$.

    \item In any dimension $d$, if the $X_i$ are pairwise disjoint, then 
    $\lambda(\F)$ is nonempty if and only if $\F$ is a vertex, in which case $\lambda(\{X_i\}) = X_i$.
    Hence the only nonempty $\lambda'(\F)$ is $\lambda'(\varnothing) = \cup\X$, meaning that $\Vol(\Delta) = |{\cup\X}|$.
    Likewise, for any face $\F \in \Delta$ we have $\Vol(\F) = |{\cup\F}|$.

    \end{enumerate}
    
\end{example}

The following proposition reveals the reason for choosing notation $\lambda, \lambda', \lambda'', \ldots$ suggestive of derivatives.
Specifically, the generating function for $\#\epsilon(x)$ encodes all of the generalized volumes $\Vol_i(\Delta)$, as its $i$th derivatives evaluated at unity:

\begin{prop}
\label{prop:gen function}
    Let $\Delta = \Delta(\X)$ be an EM simplex, and let $\displaystyle v(t) \coloneqq \sum_{x \in \cup\X} t^{\#\epsilon(x)}$.
    Then for all $i \geq 0$, we have
    \[
        \Vol_i(\Delta) = v^{(i)}(1).
    \]
\end{prop}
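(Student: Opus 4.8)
The plan is to bundle the generalized volumes into a single polynomial identity and to recognize the passage from $\lambda^{(i)}$ to $\lambda^{(i+1)}$ as formal differentiation. For each $i \geq 0$, I would introduce the polynomial
\[
v_i(t) \coloneqq \sum_{\F}|\lambda^{(i)}(\F)|\,t^{\#\F},
\]
the sum over $\F \in {\rm skel}_{\lfloor(d-1)/2\rfloor - i}(\Delta)$. Two observations are then immediate: setting $t = 1$ gives $v_i(1) = \Vol_i(\Delta)$ by Definition~\ref{def:EM volume}, and $v_0 = v$, because $\lambda^{(0)} = \epsilon^{-1}$ by~\eqref{lambda}, so that each $x \in \cup\X$ contributes exactly the monomial $t^{\#\epsilon(x)}$. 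Thus it suffices to show $v_i = v^{(i)}$ for all $i$, which I would prove by induction on $i$, the base case $i = 0$ being the identity $v_0 = v$ just noted.

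The inductive step amounts to the claim $v_{i+1} = v_i'$. First I would expand $|\lambda^{(i+1)}(\F)| = \sum_{\mathcal{G}}|\lambda^{(i)}(\mathcal{G})|$ using the recursion~\eqref{lambda i}, the sum running over the cofacets $\mathcal{G}$ of $\F$; substituting into $v_{i+1}(t)$ and interchanging the order of summation turns it into $\sum_{\mathcal{G}}|\lambda^{(i)}(\mathcal{G})|\sum_{\F}t^{\#\F}$, where $\mathcal{G}$ now ranges over ${\rm skel}_{\lfloor(d-1)/2\rfloor - i}(\Delta)$ and $\F$ over the facets of $\mathcal{G}$. Since $\mathcal{G}$ has exactly $\#\mathcal{G}$ facets, each of cardinality $\#\mathcal{G} - 1$, the inner sum collapses to $\#\mathcal{G}\cdot t^{\#\mathcal{G}-1}$, giving $v_{i+1}(t) = \sum_{\mathcal{G}}\#\mathcal{G}\cdot|\lambda^{(i)}(\mathcal{G})|\,t^{\#\mathcal{G}-1} = \tfrac{d}{dt}v_i(t)$; the empty face $\mathcal{G} = \varnothing$ contributes nothing, matching the vanishing derivative of a constant term. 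Then $v_i = v^{(i)}$ follows, and evaluating at $t = 1$ yields $\Vol_i(\Delta) = v_i(1) = v^{(i)}(1)$.

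The only step requiring care is the interchange of summation: I need to confirm that every cofacet $\mathcal{G}$ arising on the right of~\eqref{lambda i} actually lies in ${\rm skel}_{\lfloor(d-1)/2\rfloor - i}(\Delta)$ (so that $\lambda^{(i)}(\mathcal{G})$ is defined) and, conversely, that every facet of such a $\mathcal{G}$ lies one dimension lower in ${\rm skel}_{\lfloor(d-1)/2\rfloor - i - 1}(\Delta)$, so that no terms are created or destroyed when the two sums are swapped. Both are built into the fact that a cofacet raises dimension by one and a facet lowers it by one, so there is no genuine obstacle here — merely index bookkeeping — and the remaining content is the elementary computation $\tfrac{d}{dt}t^k = kt^{k-1}$.
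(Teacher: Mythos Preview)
Your proof is correct and follows essentially the same approach as the paper: both define a polynomial whose $t^k$-coefficient records the labeling at cardinality-$k$ faces, and both establish by induction that passing from $\lambda^{(i)}$ to $\lambda^{(i+1)}$ corresponds to formal differentiation (via the count $\#\mathcal{G}$ of facets of $\mathcal{G}$). The only cosmetic difference is that the paper expands $|\lambda^{(i)}(\F)|$ as $\sum_x(\text{multiplicity of }x\text{ in }\lambda^{(i)}(\F))$ and carries the triple sum over $x$, $k$, and $\F$ throughout, whereas you work directly with the aggregated cardinalities $|\lambda^{(i)}(\F)|$; the underlying argument is identical.
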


\begin{proof}
    We will first show by induction that 
    \begin{equation}
        \label{vi triple sum}
        v^{(i)}(t) = \sum_{x \in \cup\X} \hspace{1ex} \sum_{k \geq 0} \hspace{1ex}\sum_{\substack{\F\in \Delta: \\ \#\F = k}} \Big(\text{multiplicity of $x$ in $\lambda^{(i)}(\F)$}\Big) t^{k},
    \end{equation}
    where the multiplicity is 0 if $\lambda^{(i)}(\F)$ is not defined.
    In the base case $i=0$, we have 
    \[
        v(t) \coloneqq \sum_x t^{\#\epsilon(x)} = 
        \sum_x \sum_k \sum_{\#\F=k} \mathbf{1}_{\lambda(\F)}(x) \cdot t^k,
    \]
    which agrees with~\eqref{vi triple sum} since $\lambda(\F)$ is multiplicity-free.
    Now taking~\eqref{vi triple sum} as our induction hypothesis, we have
    \begin{align*}
        v^{(i+1)}(t) = \frac{d}{dt} \: v^{(i)}(t) &= \sum_x \sum_k \sum_{\#\mathcal{G} = k} k\Big(\text{mult.\ of $x$ in $\lambda^{(i)}(\mathcal{G})$}\Big) t^{k-1} \\
        &= \sum_x \sum_k \sum_{\#\mathcal{G} = k} \sum_{\substack{\text{facets} \\ \text{$\F$ of $\mathcal{G}$}}} \Big(\text{mult.\ of $x$ in $\lambda^{(i+1)}(\F)$}\Big) t^{k-1} \\
        &= \sum_x \sum_k \sum_{\#\F = k-1} \Big(\text{mult.\ of $x$ in $\lambda^{(i+1)}(\F)$}\Big) t^{k-1},
    \end{align*}
    which, upon re-indexing, proves the claim~\eqref{vi triple sum}.
    Evaluating~\eqref{vi triple sum} at $t=1$, we obtain
    \[
        v^{(i)}(1) = \sum_x \sum_k \sum_{\#\F=k} \Big(\text{mult.\ of $x$ in $\lambda^{(i)}(\F)$}\Big) = \sum_{\F \in \Delta} |\lambda^{(i)}(\F)| \eqqcolon \Vol_i(\Delta),
    \]
    as in Definition~\ref{def:EM volume}.
\end{proof}

\begin{cor}
\label{cor:Vol IDs}
    Let $\Delta = \Delta(\X)$ be an EM simplex.
    Then we have 
    \[
        \Vol_i(\Delta) = \sum_{x \in \cup\X} (\#\epsilon(x))_i
    \]
    where $(a)_i \coloneqq a(a-1) \cdots (a-i+1)$ is the Pochhammer symbol for the falling factorial.  
    In particular, we have
    \begin{alignat}{2}
        &\Vol_0(\Delta) &&= |{\cup\X}|, \label{Vol0} \tag{a}\\
        &\Vol(\Delta) &&= \sum_x \#\epsilon(x) = |\blacktriangle(\X)|, \label{Vol1} \tag{b}\\
        &\Vol_2(\Delta) &&= \sum_{x} \#\epsilon(x) \cdot \dim \epsilon(x). \label{Vol2} \tag{c}
    \end{alignat}
\end{cor}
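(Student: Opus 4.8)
The plan is to deduce Corollary~\ref{cor:Vol IDs} directly from Proposition~\ref{prop:gen function} by a short generating-function computation. Recall that $v(t) = \sum_{x \in \cup\X} t^{\#\epsilon(x)}$, so $v$ is a polynomial in which the monomial $t^{\#\epsilon(x)}$ contributes for each $x \in \cup\X$. The key observation is the elementary identity for a single power: $\frac{d^i}{dt^i} t^a = (a)_i \, t^{a-i}$, where $(a)_i = a(a-1)\cdots(a-i+1)$ is the falling factorial. Differentiating $v$ term by term $i$ times and then setting $t=1$ kills the surviving power of $t$ and leaves exactly $\sum_{x \in \cup\X} (\#\epsilon(x))_i$. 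Combining this with $\Vol_i(\Delta) = v^{(i)}(1)$ from Proposition~\ref{prop:gen function} gives the main formula.

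For the three special cases, I would simply specialize $i = 0, 1, 2$ in the general formula. For \eqref{Vol0}: $(a)_0 = 1$ by convention (the empty product), so $\Vol_0(\Delta) = \sum_{x \in \cup\X} 1 = |{\cup\X}|$. For \eqref{Vol1}: $(a)_1 = a$, so $\Vol(\Delta) = \sum_x \#\epsilon(x)$; then I need to identify this sum with $|\blacktriangle(\X)|$. This follows by unwinding Definition~\ref{def:GSD}: a dot $x$ with $\deg_\X(x) = k$ contributes multiplicity $\min\{k,\, d-k+1\}$ to $\blacktriangle(\X)$, and by the definition~\eqref{epsilon} of $\epsilon$, the face $\epsilon(x)$ has cardinality $k$ when $x \in \Min(\X) \cup \Med(\X)$ (i.e.\ $k \le (d+1)/2$, so $\min\{k, d-k+1\} = k$) and cardinality $d+1-k$ when $x \in \Maj(\X)$ (i.e.\ $k > (d+1)/2$, so $\min\{k, d-k+1\} = d-k+1$); either way $\#\epsilon(x) = \min\{k,\, d-k+1\}$ is exactly the multiplicity of $x$ in $\blacktriangle(\X)$. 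Summing over $x$ gives $\sum_x \#\epsilon(x) = |\blacktriangle(\X)|$. For \eqref{Vol2}: $(a)_2 = a(a-1)$, and since $\dim \epsilon(x) = \#\epsilon(x) - 1$ by the dimension convention of Section~\ref{sub:terminology}, we get $(\#\epsilon(x))_2 = \#\epsilon(x)\,(\#\epsilon(x)-1) = \#\epsilon(x) \cdot \dim\epsilon(x)$, hence $\Vol_2(\Delta) = \sum_x \#\epsilon(x) \cdot \dim\epsilon(x)$.

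I do not anticipate a real obstacle here: the corollary is essentially a restatement of Proposition~\ref{prop:gen function} once one knows the falling-factorial derivative rule, together with bookkeeping. The one point requiring slight care is the identification $\sum_x \#\epsilon(x) = |\blacktriangle(\X)|$ in \eqref{Vol1}, where one must check the $\min\{k,\,d-k+1\}$ in Definition~\ref{def:GSD} matches $\#\epsilon(x)$ case-by-case according to whether $x$ lies in $\Min$, $\Med$, or $\Maj$ of $\X$; the boundary case $\deg_\X(x) = (d+1)/2$ (possible only when $d$ is odd) is handled correctly because $\epsilon$ sends $\Med(\X)$ into the ``$x \in X$'' branch, giving $\#\epsilon(x) = (d+1)/2 = d - (d+1)/2 + 1$, consistent with either formula. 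Everything else is a direct substitution.
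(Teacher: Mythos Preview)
Your proposal is correct and follows essentially the same approach as the paper: both derive the general falling-factorial formula directly from Proposition~\ref{prop:gen function} via the elementary identity $\frac{d^i}{dt^i}\,t^a = (a)_i\,t^{a-i}$, and both verify the second equality in~\eqref{Vol1} by the same case analysis on $\Min(\X)$, $\Med(\X)$, $\Maj(\X)$ to show $\#\epsilon(x) = \min\{\deg_\X(x),\, d+1-\deg_\X(x)\}$. Your write-up is simply more explicit than the paper's, which dispatches everything but the $|\blacktriangle(\X)|$ identification in one sentence.
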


\begin{proof}
    The general formula follows immediately from Proposition~\ref{prop:gen function}; the only statement requiring proof is the second equality in~\eqref{Vol1}.
    By~\eqref{epsilon} we have 
    \[
        \#\epsilon(x) = \begin{cases}
            \deg_{\X}(x), & x \in \Min(\X) \cup \Med(\X),\\
            d+1-\deg_{\X}(x), & x \in \Maj(\X),
        \end{cases}
    \]
    in which both cases take the common form $\#\epsilon(x) = \min\{\deg_{\X}(x), \: d+1-\deg_{\X}(x)\}$.
    Hence by Definition~\ref{def:GSD}, we see that $\#\epsilon(x)$ is the multiplicity of $x$ in $\blacktriangle(\X)$, and the result follows.
\end{proof}

We conclude this section with the following corollary, which is the motivating fact behind the definition (and the name) of the EM simplex:

\begin{cor}
    \label{cor:EMD equals volume of Delta}

    Let $h_0, \ldots, h_d$ be histograms, and let $\H = \{H_0, \ldots, H_d\}$ be the family of cumulative histograms.
    Then
    \[
        \EMD(h_0, \ldots, h_d) = \Vol(\Delta(\H)), 
    \]
    where $\Delta(\H)$ is the EM simplex on $\H$.
\end{cor}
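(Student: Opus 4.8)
The plan is to deduce this statement immediately by chaining together two results already established. On the analytic side, Proposition~\ref{prop:EMC def original} gives $\EMD(h_0,\ldots,h_d) = |\blacktriangle(\H)|$, i.e.\ the generalized earth mover's distance equals the cardinality of the generalized symmetric difference of the cumulative histograms. On the combinatorial side, equation~\eqref{Vol1} of Corollary~\ref{cor:Vol IDs}, specialized to the family $\X = \H$, states that $\Vol(\Delta(\H)) = |\blacktriangle(\H)|$. Comparing these two identities yields the claim at once.

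The real content here lies not in this corollary but in what precedes it: the map $\epsilon$ and the first labeling $\lambda' = \lambda^{(1)}$ were designed precisely so that each dot $x \in \cup\H$ is recorded a total of $\#\epsilon(x) = \min\{\deg_\H(x),\, d+1 - \deg_\H(x)\}$ times across the labels $\lambda'(\F)$ — which is exactly its multiplicity in $\blacktriangle(\H)$ by Definition~\ref{def:GSD}. I therefore do not anticipate any obstacle: the proof should be a one-line appeal to Proposition~\ref{prop:EMC def original} together with Corollary~\ref{cor:Vol IDs}\eqref{Vol1}.

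If a more self-contained argument is wanted, one can instead unwind Definitions~\ref{def:GSD}, \ref{def:EM simplex}, and~\ref{def:EM volume} directly: a dot $x$ with $\#\epsilon(x) = k$ lies in $\lambda'(\F)$ for exactly the $k$ faces $\F$ that are facets of $\epsilon(x)$, so summing $|\lambda'(\F)|$ over the skeleton ${\rm skel}_{\lfloor(d-1)/2\rfloor-1}(\Delta)$ reproduces $\sum_{x \in \cup\H} \#\epsilon(x) = |\blacktriangle(\H)|$, and this equals $\EMD(h_0,\ldots,h_d)$ by Proposition~\ref{prop:EMC def original}. Either way, the argument is routine given the groundwork already laid in this section.
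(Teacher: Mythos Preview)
Your proposal is correct and matches the paper's own proof exactly: the paper simply says the result is immediate upon comparing Proposition~\ref{prop:EMC def original} with Corollary~\ref{cor:Vol IDs}\eqref{Vol1}. Your additional ``self-contained'' unwinding is a faithful expansion of what Corollary~\ref{cor:Vol IDs}\eqref{Vol1} already established, so nothing further is needed.
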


\begin{proof}
    This is immediate upon comparing Proposition~\ref{prop:EMC def original} with Corollary~\ref{cor:Vol IDs}\eqref{Vol1}.
\end{proof}

\section{Main results}

We now present our analogue of the Cayley--Menger formula,
expressing the volume of an EM simplex in terms of its edge lengths (i.e., the volumes of its $1$-dimensional faces).
We then present another volume formula, this time in terms of surface area, which, in even dimensions, is an exact generalization of the observation~\eqref{observation 3 and 2} for $d=2$.
For each of our two formulas, we also record a corollary translating the result into the language of the EMD (by means of Corollary~\ref{cor:EMD equals volume of Delta} above), which was the original motivation behind this paper.

\begin{theorem}[Cayley--Menger formula for EM simplices]
    \label{thm:CM}
    
    Let $\Delta$ be an EM simplex of dimension $d$.  
    Then we have
    \[
        \Vol(\Delta) = \frac{1}{d} \Bigg( \Vol_2(\Delta) + \sum_{\mathclap{\substack{\textup{edges $\mathcal{E}$} \\ \textup{of $\Delta$}}}} \Vol(\mathcal{E}) \Bigg).
    \]
\end{theorem}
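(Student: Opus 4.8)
The plan is to collapse everything down to a single sum over the elements $x \in \cup\X$, using the ``per-element'' formulas of Corollary~\ref{cor:Vol IDs}. Recall from that corollary (and the convention $\dim F = \#F-1$) that
\[
\Vol(\Delta) = \sum_{x \in \cup\X} \#\epsilon(x), \qquad \Vol_2(\Delta) = \sum_{x \in \cup\X} \#\epsilon(x)\big(\#\epsilon(x)-1\big).
\]
So the only thing left to do is to rewrite the edge-sum $\sum_{\mathcal{E}} \Vol(\mathcal{E})$ in the same ``$\sum_x$'' form, and then check that the three pieces combine with the promised factor $1/d$.

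First I would invoke the edge-length computation in Example~\ref{ex:volumes}: for the edge $\mathcal{E} = \{X_i, X_j\}$ we have $\Vol(\mathcal{E}) = |X_i \smtriangle X_j|$, i.e.\ the number of $x$ lying in exactly one of $X_i, X_j$. Interchanging the order of summation,
\[
\sum_{\mathclap{\substack{\textup{edges }\mathcal{E} \\ \textup{of }\Delta}}} \Vol(\mathcal{E}) \;=\; \sum_{i<j} |X_i \smtriangle X_j| \;=\; \sum_{x \in \cup\X} \#\big\{\, \{i,j\} : \text{exactly one of } X_i, X_j \text{ contains } x \,\big\}.
\]
For fixed $x$ with $\deg_\X(x) = k$, there are $k$ vertices containing $x$ and $d+1-k$ not containing $x$, so the inner count is $k(d+1-k)$; hence $\sum_{\mathcal{E}} \Vol(\mathcal{E}) = \sum_x \deg_\X(x)\big(d+1-\deg_\X(x)\big)$.

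The one point requiring a moment's care — and really the crux of the whole argument — is that this last sum is unchanged when $\deg_\X(x)$ is replaced by $\#\epsilon(x)$. Indeed, as noted in the proof of Corollary~\ref{cor:Vol IDs}, $\#\epsilon(x) = \min\{\deg_\X(x),\, d+1-\deg_\X(x)\}$, so the unordered pair $\{\deg_\X(x),\, d+1-\deg_\X(x)\}$ is exactly $\{\#\epsilon(x),\, d+1-\#\epsilon(x)\}$, and the quantity $a(d+1-a)$ depends only on that pair. Therefore
\[
\Vol_2(\Delta) + \sum_{\mathclap{\substack{\textup{edges }\mathcal{E} \\ \textup{of }\Delta}}} \Vol(\mathcal{E}) \;=\; \sum_{x} \Big[\, \#\epsilon(x)\big(\#\epsilon(x)-1\big) + \#\epsilon(x)\big(d+1-\#\epsilon(x)\big) \,\Big] \;=\; \sum_x \#\epsilon(x)\cdot d \;=\; d\,\Vol(\Delta),
\]
since the bracketed factors satisfy $\big(\#\epsilon(x)-1\big) + \big(d+1-\#\epsilon(x)\big) = d$. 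Dividing by $d$ (the statement being vacuous or trivial for $d=0$) yields the theorem. Beyond this bookkeeping there is no real obstacle: the whole proof is the observation that $\Vol_2$ supplies the ``$\#\epsilon(x)-1$'' term and the edge-sum supplies the complementary ``$d+1-\#\epsilon(x)$'' term, which is precisely what makes the count per element equal to $d\cdot\#\epsilon(x)$.
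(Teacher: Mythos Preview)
Your proof is correct and follows essentially the same route as the paper's: both reduce to the per-element identity $\#\epsilon(x)\cdot d = \#\epsilon(x)\big(\#\epsilon(x)-1\big) + \#\epsilon(x)\big(d+1-\#\epsilon(x)\big)$ via Corollary~\ref{cor:Vol IDs}, identifying the two summands with $\Vol_2(\Delta)$ and the edge-sum respectively. The only cosmetic difference is that the paper writes the second term as $\#\epsilon(x)\cdot\operatorname{codim}\epsilon(x)$ and interprets it as the number of edges joining $\epsilon(x)$ to its link, whereas you count directly via $\deg_\X(x)$ and then invoke the symmetry $k(d+1-k)=\#\epsilon(x)\big(d+1-\#\epsilon(x)\big)$.
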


\begin{proof}
    By Corollary~\ref{cor:Vol IDs}\eqref{Vol1}, we have
    \begin{align*}
        d \cdot \Vol(\Delta) &= \sum_{x \in \cup\X} \#\epsilon(x) \cdot d \\
        &= \sum_x \#\epsilon(x) \cdot \Big(\dim \epsilon(x) + \operatorname{codim} \epsilon(x) \Big)\\
        &= \sum_x \#\epsilon(x) \cdot \dim \epsilon(x) + \sum_x \#\epsilon(x) \cdot \operatorname{codim} \epsilon(x) \\
        &= \Vol_2(\Delta) + \sum_x \underbrace{\#\epsilon(x) \cdot \operatorname{codim} \epsilon(x)}_{\substack{\text{\# edges connecting} \\ \text{$\epsilon(x)$ and its link}}}, \\
    \end{align*}
    where we used Corollary~\ref{cor:Vol IDs}\eqref{Vol2} to rewrite the first sum as a generalized volume.
    As for our claim below the second sum, any such edge clearly has $\#\epsilon(x)$ choices for its first vertex, and $\operatorname{codim} \epsilon(x)$ choices for its second vertex.
    Now, for any edge $\mathcal{E} = \{X_i, X_j\}$, we have $x \in X_i \smtriangle X_j$ if and only if $x$ belongs to exactly one of the two sets, i.e., $\mathcal{E}$ connects $\epsilon(x)$ to its link.
    Therefore, summing over all $x \in \cup\X$, our computation above becomes
    \begin{align*}
        d \cdot \Vol(\Delta) &= \Vol_2(\Delta) + \sum_{i < j} | X_i \smtriangle X_j | \\
        &= \Vol_2(\Delta) + \sum_{\mathclap{\substack{\textup{edges $\mathcal{E}$} \\ \textup{of $\Delta$}}}} \Vol(\mathcal{E}),
    \end{align*}
    where we have rewritten the symmetric differences as edge lengths, using part \eqref{ex: edge length fact} of Example~\ref{ex:volumes}.
\end{proof}

\begin{cor}
    \label{cor:CM}
    Let $h_0, \ldots, h_d$ be histograms, and let $\H$ be the family of cumulative histograms.
    Then
    
    \resizebox{.98\linewidth}{!}{
    \begin{minipage}{\linewidth}
    \begin{align*}
        \EMD(h_0, \ldots, h_d) = \frac{1}{d} \left( \sum_{0 \leq i < j \leq d} \hspace{-1ex} \EMD(h_i, h_j) + \sum_{k=2}^{\lceil d/2 \rceil} k(k-1) \cdot \#\Big\{ x \in \cup\H \;\Big| \; \deg_{\H}(x) = \textup{$k$ or $d-k+1$}\Big\} \right).
    \end{align*}
    \end{minipage}
    }
\end{cor}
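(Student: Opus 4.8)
The plan is to read the corollary off directly from Theorem~\ref{thm:CM} by feeding in the EMD--volume dictionary assembled in Section~\ref{sec:EM simplex}. First I would specialize to $\X = \H$, so that $\Delta = \Delta(\H)$ is the EM simplex on the cumulative histograms; by Corollary~\ref{cor:EMD equals volume of Delta} the left-hand side $\Vol(\Delta)$ is exactly $\EMD(h_0, \ldots, h_d)$. Next I would rewrite the edge sum: each edge of $\Delta$ is a pair $\mathcal{E} = \{H_i, H_j\}$ with $0 \le i < j \le d$, and part~\eqref{ex: edge length fact} of Example~\ref{ex:volumes} gives $\Vol(\mathcal{E}) = |H_i \smtriangle H_j|$, which by~\eqref{EMD is symm diff d=2} equals $\EMD(h_i, h_j)$. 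Hence $\sum_{\mathcal{E}} \Vol(\mathcal{E}) = \sum_{0 \le i < j \le d} \EMD(h_i, h_j)$, which is the first inner sum of the corollary.

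The remaining task is to put $\Vol_2(\Delta)$ into the stated form. For this I would invoke the general falling-factorial formula of Corollary~\ref{cor:Vol IDs} with $i = 2$, giving $\Vol_2(\Delta) = \sum_{x \in \cup\H} \#\epsilon(x)\big(\#\epsilon(x) - 1\big)$. Writing $k \coloneqq \#\epsilon(x) = \min\{\deg_\H(x),\, d+1-\deg_\H(x)\}$, the values $k = 0$ and $k = 1$ contribute zero, and for $k \ge 2$ the condition $\#\epsilon(x) = k$ is equivalent to $\deg_\H(x) = k$ or $\deg_\H(x) = d-k+1$; moreover $\#\epsilon(x)$ never exceeds $\lfloor(d+1)/2\rfloor = \lceil d/2\rceil$. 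Collecting the terms of the sum according to the value of $k$ then yields $\Vol_2(\Delta) = \sum_{k=2}^{\lceil d/2\rceil} k(k-1)\cdot\#\{x \in \cup\H : \deg_\H(x) = k \text{ or } d-k+1\}$. Substituting the three rewritten pieces into the identity of Theorem~\ref{thm:CM} and retaining the prefactor $1/d$ then gives the corollary.

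Since every step is an invocation of a result already in hand, I do not expect a real obstacle; the one point deserving a sentence of care is the grouping step, where I should confirm that the fibers of the map $x \mapsto \#\epsilon(x)$ actually partition $\cup\H$. This holds because $\deg_\H(x)$ is a single integer, so for distinct $k$ the conditions ``$\deg_\H(x) = k$'' and ``$\deg_\H(x) = d-k+1$'' cannot overlap across different values of $k$; and in the sole coincidence case --- $d$ odd with $k = (d+1)/2$, where $k = d-k+1$ --- the fiber is exactly $\Med(\H)$ and nothing is double-counted. Everything beyond that is bookkeeping.
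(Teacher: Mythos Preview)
Your proposal is correct and is precisely the translation the paper has in mind: the corollary is stated immediately after Theorem~\ref{thm:CM} without a separate proof, the understanding being that one applies Corollary~\ref{cor:EMD equals volume of Delta} to the left-hand side, Example~\ref{ex:volumes}\eqref{ex: edge length fact} together with~\eqref{EMD is symm diff d=2} to the edge sum, and Corollary~\ref{cor:Vol IDs} (the falling-factorial formula, grouped by $k = \#\epsilon(x)$) to the $\Vol_2$ term. Your care about the fibers of $x \mapsto \#\epsilon(x)$ being disjoint for distinct $k$ is a nice touch that the paper leaves implicit.
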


Our second main result is a volume formula in terms of surface area rather than edge lengths:

\begin{theorem}[Volume via surface area]
    \label{thm:Vol from SA}

    Let $\Delta = \Delta(\X)$ be an EM simplex of dimension $d$.
    Then
    \[
        \Vol(\Delta) = 
        \frac{1}{d} \left( {\rm{SA}}(\Delta) + \frac{d+1}{2} \cdot |\Med(\X)| \right),
    \]
    where ${\rm SA}(\Delta)$ denotes the surface area (i.e., the sum of the volumes of the facets).
\end{theorem}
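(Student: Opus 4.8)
The plan is to expand the surface area one facet at a time and then reduce everything to a local identity about a single element $x \in \cup\X$. The facet of $\Delta$ opposite the vertex $X_i$ is itself an EM simplex, namely $\Delta(\X \setminus \{X_i\})$, of dimension $d-1$, so $\operatorname{SA}(\Delta) = \sum_{i=0}^{d} \Vol(\Delta(\X \setminus \{X_i\}))$. By Corollary~\ref{cor:Vol IDs}\eqref{Vol1}, each summand equals $\sum_{x} \#\epsilon_i(x)$, where $\epsilon_i$ is the half-skeleton map attached to the subfamily $\X \setminus \{X_i\}$; and, exactly as in the proof of that corollary, $\#\epsilon_i(x) = \min\{\deg_i(x),\, d - \deg_i(x)\}$, writing $\deg_i(x)$ for the number of sets among $\{X_j : j \neq i\}$ that contain $x$. (An element with $\deg_\X(x)=1$ lying in the one set $X_i$ is not in the domain of $\epsilon_i$, but formally extending $\#\epsilon_i(x) \coloneqq 0$ there is harmless, so I will let all sums range over $\cup\X$.)

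First I would record how deleting a vertex perturbs the degree: $\deg_i(x) = \deg_\X(x) - 1$ when $x \in X_i$, and $\deg_i(x) = \deg_\X(x)$ otherwise, with exactly $\deg_\X(x)$ indices of the first type and $d+1-\deg_\X(x)$ of the second. Collecting the facet contributions of a fixed $x$ therefore gives
\[
\sum_{i=0}^{d} \#\epsilon_i(x) = k \cdot \min\{k-1,\, d-k+1\} + (d+1-k)\cdot \min\{k,\, d-k\}, \qquad k \coloneqq \deg_\X(x).
\]
The crux is to show that the right-hand side equals $d \cdot \#\epsilon(x) - \tfrac{d+1}{2}\cdot \mathbf{1}[x \in \Med(\X)]$. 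Granting this, summing over $x \in \cup\X$ and invoking Corollary~\ref{cor:Vol IDs}\eqref{Vol1} once more yields $\operatorname{SA}(\Delta) = d\cdot\Vol(\Delta) - \tfrac{d+1}{2}|\Med(\X)|$, which rearranges into the claimed formula.

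To prove the local identity I would either split into the three cases $k < \tfrac{d+1}{2}$, $k = \tfrac{d+1}{2}$, $k > \tfrac{d+1}{2}$ (the symmetry $k \mapsto d+1-k$, which swaps membership with non-membership, folds the last case into the first), or, more slickly, substitute $\min\{p,q\} = \tfrac12\big(p+q-|p-q|\big)$ everywhere and put $a \coloneqq 2k-(d+1)$; the claim then collapses to the elementary identity $k\,|a-1| + (d+1-k)\,|a+1| = d\,|a| + (d+1)\,\mathbf{1}[a=0]$, dispatched by inspecting the sign of $a$. Either way the arithmetic is short. The one point demanding care is the median term: it appears precisely because when $d$ is odd and $k=\tfrac{d+1}{2}$ (so $a=0$), both inner minima $\min\{k-1,d-k+1\}$ and $\min\{k,d-k\}$ equal $k-1$ rather than $k$, creating the deficit $\tfrac{d+1}{2}$ relative to $d\cdot\#\epsilon(x)=dk$.

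The main obstacle, such as it is, is bookkeeping rather than ideas: one must handle the passage from $\Delta$ to its facets uniformly across all parities of $d$ and all degrees $k$ (including the degenerate values $k=0$ and $k=d+1$, which contribute zero throughout), and attribute the median correction to exactly the elements of $\Med(\X)$. Once the local identity is secured, the theorem follows immediately, and in the even-dimensional case it degenerates to the clean statement $\Vol(\Delta) = \tfrac1d \operatorname{SA}(\Delta)$ since then $\Med(\X) = \varnothing$.
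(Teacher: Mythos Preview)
Your proposal is correct and follows essentially the same approach as the paper: both reduce to the per-element identity $\sum_{\F}\#\epsilon_{\F}(x)=d\cdot\#\epsilon(x)-\tfrac{d+1}{2}\,\mathbf{1}[x\in\Med(\X)]$ and verify it by the three-way split on $k<\tfrac{d+1}{2}$, $k=\tfrac{d+1}{2}$, $k>\tfrac{d+1}{2}$ (the paper phrases this combinatorially via $\dim\epsilon(x)+\operatorname{codim}\epsilon(x)=d$ and which facet's opposite vertex contains $x$, while you do the algebra directly). Your absolute-value repackaging is a pleasant cosmetic alternative to the case split, not a different argument.
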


\begin{proof}
    In this proof, we reserve the symbol ``$\F$'' specifically for facets of $\Delta$, rather than generic faces.
    We will write $\epsilon_{_{\F}}(x)$ to denote the map $\epsilon$ in the context of $\Delta(\F)$ rather than $\Delta(\X)$: hence the conditions in the definition~\eqref{epsilon} pertain to the membership of $x$ in $\Min(\F)$, $\Maj(\F)$, or $\Med(\F)$.
    We begin by expressing $\#\epsilon(x) \cdot d$ in terms of $\#\epsilon_{_{\F}}(x)$, for each $x \in \cup\X$:

    \textbf{Case 1:} $x \in \Min(\X)$.  We have 
        \begin{align}
            \#\epsilon(x) \cdot d &= \#\epsilon(x) \cdot \Big(\dim \epsilon(x) + \operatorname{codim} \epsilon(x) \Big) \nonumber \\
            &= \#\epsilon(x) \cdot \dim \epsilon(x) + \operatorname{codim} \epsilon(x) \cdot \#\epsilon(x), \label{expansion}
        \end{align}
        which we interpret combinatorially (from left to right) as follows.
        First, $\#\epsilon(x)$ is the number of facets $\F$ whose opposite vertex contains $x$; for any such $\F$, we have $\#\epsilon_{_{\F}}(x) = \#\epsilon(x) - 1 = \dim \epsilon(x)$.
        Next, $\operatorname{codim} \epsilon(x)$ is the number of facets $\F$ whose opposite vertex does \emph{not} contain $x$; for any such $\F$, we have $\#\epsilon_{_{\F}}(x) = \#\epsilon(x)$, since $x \in \Min(\X) \Longrightarrow x \in \Min(\F) \cup \Med(\F)$.
        It follows that the expansion~\eqref{expansion} is the sum of the $\#\epsilon_{_{\F}}(x)$'s, taken over all facets $\F$:
        \begin{equation}
        \label{epsilon(x) d for MinX}
            x \in \Min(\X) \Longrightarrow \#\epsilon(x) \cdot d = \sum_\F \#\epsilon_{_{\F}}(x).
        \end{equation}
        
        \textbf{Case 2:} $x \in \Maj(\X)$.
        In this case, we have a ``dual'' interpretation of~\eqref{expansion}, as follows (from right to left).
        First, $\operatorname{codim} \epsilon(x)$ is the number of facets $\F$ whose opposite vertex contains $x$; for any such $\F$, we have $\#\epsilon_{_{\F}}(x) = \#\epsilon(x)$.
        This is because $x \in \Maj(\X) \Longrightarrow x \in \Med(\F) \cup \Maj(\F)$, and so $\epsilon_{_{\F}}(x)$ equals $\#\F - (\deg_{\X}(x) - 1) = d + 1 - \deg_{\X}(x) = \#\epsilon(x)$.
        Next, $\#\epsilon(x)$ is the number of facets $\F$ whose opposite vertex does \emph{not} contain $x$; for any such $\F$, we have $\#\epsilon_{_{\F}}(x) = \#\epsilon(x) -1 = \dim \epsilon(x)$.
        Therefore, just as in Case 1, the expansion~\eqref{expansion} is the sum of all $\#\epsilon_{_\F}(x)$'s:
        \begin{equation}
        \label{epsilon(x) d for MajX}
            x \in \Maj(\X) \Longrightarrow \#\epsilon(x) \cdot d = \sum_\F \#\epsilon_{_{\F}}(x).
        \end{equation}
        
        \textbf{Case 3:} $x \in \Med(\X)$.
        This time, for each of the $d+1$ facets $\F$, we have $\#\epsilon_{_{\F}}(x) = \frac{d+1}{2} - 1$, regardless of whether $x$ is contained in the opposite vertex.
        Hence
        \[
            \sum_\F \#\epsilon_{_{\F}}(x) = (d+1)\left(\frac{d+1}{2}-1\right) = d \cdot \underbrace{\left(\frac{d+1}{2}\right)}_{\#\epsilon(x)} -\frac{d+1}{2},
        \]
        which we rearrange to obtain
        \begin{equation}
            \label{epsilonx for MedX}
            x \in \Med(\X) \Longrightarrow \#\epsilon(x) \cdot d = \sum_\F \#\epsilon_{_{\F}}(x) + \frac{d+1}{2}.
        \end{equation}
        
        Combining these results, we conclude that
        \begin{align*}
            d \cdot \overbrace{\Vol(\Delta)}^{\text{Cor.~\ref{cor:Vol IDs}\eqref{Vol1}}} = \sum_{x \in \cup\X} \# \epsilon(x) \cdot d & = \overbrace{\sum_{\mathclap{x \in \Min(\X)}} \#\epsilon(x) \cdot d}^{\eqref{epsilon(x) d for MinX}} + \overbrace{\sum_{\mathclap{x \in \Maj(\X)}} \#\epsilon(x) \cdot d}^{\eqref{epsilon(x) d for MajX}} +
            \overbrace{\sum_{\mathclap{x \in \Med(\X)}} \#\epsilon(x) \cdot d}^{\eqref{epsilonx for MedX}}\\
            &= \sum_{x \not\in \Med(\X)} 
            \left(\sum_\F \#\epsilon_{_{\F}}(x) \right) + \sum_{x \in \Med(\X)} \left( \sum_\F \#\epsilon_{_{\F}}(x) + \frac{d+1}{2} \right) \\
            &= \sum_{x \in \cup\X} \left(\sum_\F \#\epsilon_{_{\F}}(x) \right) + \sum_{\mathclap{x \in \Med(\X)}} (d+1)/2\\
            &= \sum_\F \underbrace{\left(\sum_{x \in \cup\F} \#\epsilon_{_{\F}}(x) \right)}_{\Vol(\F)} \hspace{1ex} + \sum_{\mathclap{x \in \Med(\X)}} (d+1)/2\\
            &= {\rm SA}(\Delta) + \frac{d+1}{2} \cdot |\Med(\X)|. \qedhere
        \end{align*}
\end{proof}

\begin{cor}
\label{cor:Heron}
    Let $h_0, \ldots, h_d$ be histograms, and let $\H$ be the family of cumulative histograms.
    Then
    \[
        \EMD(h_0, \ldots, h_d) = \frac{1}{d} \sum_{i=0}^d \EMD(h_0, \ldots, \widehat{h}_i, \ldots, h_d) + \begin{cases}
            0, & \textup{$d$ even},\\
            \frac{d+1}{2d} \cdot |\Med(\H)|, & \textup{$d$ odd}.
        \end{cases}
    \]
\end{cor}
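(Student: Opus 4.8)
The plan is to obtain this corollary by simply translating Theorem~\ref{thm:Vol from SA} into the language of the EMD, using Corollary~\ref{cor:EMD equals volume of Delta} twice. First I would set $\H = \{H_0, \ldots, H_d\}$ and apply Corollary~\ref{cor:EMD equals volume of Delta} to rewrite the left-hand side as $\Vol(\Delta(\H))$; then Theorem~\ref{thm:Vol from SA} gives $\Vol(\Delta(\H)) = \frac{1}{d}\bigl(\mathrm{SA}(\Delta(\H)) + \frac{d+1}{2}\,|\Med(\H)|\bigr)$. It remains only to re-express the surface-area term and to dispose of the parity.

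The single substantive point is identifying $\mathrm{SA}(\Delta(\H))$ with $\sum_{i=0}^d \EMD(h_0, \ldots, \widehat{h}_i, \ldots, h_d)$. By definition, $\mathrm{SA}(\Delta(\H))$ is the sum of the volumes of the $d+1$ facets, and the facet opposite $H_i$ is $\Delta(\F_i)$ with $\F_i = \H \setminus \{H_i\}$. Here I would note the bookkeeping fact that $\F_i$ is exactly the family of cumulative histograms of $h_0, \ldots, \widehat{h}_i, \ldots, h_d$, since the cumulative histogram of a given $h_j$ does not depend on which list it sits in. A second application of Corollary~\ref{cor:EMD equals volume of Delta}, now in dimension $d-1$, then gives $\Vol(\Delta(\F_i)) = \EMD(h_0, \ldots, \widehat{h}_i, \ldots, h_d)$, and summing over $i$ produces the claimed term.

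For the case split I would invoke the remark following the definition of $\Min, \Med, \Maj$: if $d$ is even then $\deg_\H(x) = \tfrac{d+1}{2}$ is impossible, so $\Med(\H) = \varnothing$ and the correction term vanishes; if $d$ is odd the correction term is $\frac{1}{d}\cdot\frac{d+1}{2}\,|\Med(\H)| = \frac{d+1}{2d}\,|\Med(\H)|$, matching the stated formula. I do not anticipate any genuine obstacle — this is essentially a two-line corollary — but the one place to tread carefully is the claim that the facet EM simplex $\Delta(\F_i)$ appearing inside $\mathrm{SA}$ is the \emph{intrinsic} EM simplex on the $d$ sets $\F_i$ (with its own half-skeleton and labelings in dimension $d-1$), rather than a mere restriction of the labelings of $\Delta(\H)$; this is precisely the convention $\Vol(\F) \coloneqq \Vol(\Delta(\F))$ fixed in Section~\ref{sec:EM simplex}, and it is what makes Corollary~\ref{cor:EMD equals volume of Delta} applicable at the lower dimension.
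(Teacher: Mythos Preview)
Your proposal is correct and is exactly the route the paper intends: the corollary is stated without proof, and the surrounding text makes clear it is the direct translation of Theorem~\ref{thm:Vol from SA} via Corollary~\ref{cor:EMD equals volume of Delta}, together with the observation that $\Med(\H)=\varnothing$ when $d$ is even. Your care in noting that $\Vol(\F)$ means $\Vol(\Delta(\F))$ intrinsically (so that Corollary~\ref{cor:EMD equals volume of Delta} applies to each facet) is the only point worth flagging, and you have it right.
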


\section{Full example}
\label{sec:Examples}

We revisit the example from Figure~\ref{fig:example EMC d3}, where $d=3$, with the $h_i$ and $H_i$ below:
\[
\begin{tikzpicture}[scale=.2]
  \foreach \x/\y in {1/1, 1/2, 2/1, 2/2, 3/1, 3/2, 3/3} {
    \fill (\x, \y) circle (10pt);
  }
  
  \draw[->] (0,0) -- (4,0) node[right] {};
  \draw[->] (0,0) -- (0,4) node[above] {};
  
\node [anchor=east] at (current bounding box.west) {$H_0=$};
\node [anchor=south] at (current bounding box.north) {$h_0 = (2,0,1)$};
  
\end{tikzpicture}
\qquad
\begin{tikzpicture}[scale=.2]
  \foreach \x/\y in {2/1, 2/2, 2/3, 3/1, 3/2, 3/3} {
    \fill (\x, \y) circle (10pt);
  }
  
  \draw[->] (0,0) -- (4,0) node[right] {};
  \draw[->] (0,0) -- (0,4) node[above] {};
  
\node [anchor=east] at (current bounding box.west) {$H_1=$};
\node [anchor=south] at (current bounding box.north) {$h_1 = (0,3,0)$};
  
\end{tikzpicture}
\qquad
\begin{tikzpicture}[scale=.2]
  \foreach \x/\y in {1/1, 2/1, 3/1, 3/2, 3/3} {
    \fill (\x, \y) circle (10pt);
  }
  
  \draw[->] (0,0) -- (4,0) node[right] {};
  \draw[->] (0,0) -- (0,4) node[above] {};
  
\node [anchor=east] at (current bounding box.west) {$H_2=$};
\node [anchor=south] at (current bounding box.north) {$h_2 = (1,0,2)$};
  
\end{tikzpicture}
\qquad
\begin{tikzpicture}[scale=.2]
  \foreach \x/\y in {3/1, 3/2, 3/3} {
    \fill (\x, \y) circle (10pt);
  }
  
  \draw[->] (0,0) -- (4,0) node[right] {};
  \draw[->] (0,0) -- (0,4) node[above] {};
  
\node [anchor=east] at (current bounding box.west) {$H_3=$};
\node [anchor=south] at (current bounding box.north) {$h_3 = (0,0,3)$};
  
\end{tikzpicture}
\]

\noindent We have already seen that $\EMD(h_0, \ldots, h_3) = 7$; now we will verify Theorems~\ref{thm:CM} and~\ref{thm:Vol from SA}.
This example may also help to clarify the definitions introduced in Section~\ref{sec:EM simplex}.

Let $\H = \{H_0, \ldots, H_3\}$, and let $\Delta = \Delta(\H)$ be the EM simplex.
First we note that $\cup\H$ has 8 elements, namely all dots in the $3 \times 3$ grid except the dot in the upper-left, which does not appear in any of the $H_i$.
We have the following partition of $\cup\H$:
\[
\begin{tikzpicture}[scale=.2]
  \foreach \x/\y in {1/2, 2/3} {
    \fill (\x, \y) circle (10pt);
  }
  \draw[gray] (0.25,0.25) rectangle (3.75,3.75);
  \node [anchor=south] at (current bounding box.north) {$\Min(\H)$};
  
\end{tikzpicture} 
\qquad
\begin{tikzpicture}[scale=.2]
  \foreach \x/\y in {1/1, 2/2} {
    \fill (\x, \y) circle (10pt);
  }
  \draw[gray] (0.25,0.25) rectangle (3.75,3.75);
  \node [anchor=south] at (current bounding box.north) {$\Med(\H)$};
  
\end{tikzpicture}
\qquad
\begin{tikzpicture}[scale=.2]
  \foreach \x/\y in {2/1, 3/1, 3/2, 3/3} {
    \fill (\x, \y) circle (10pt);
  }
  \draw[gray] (0.25,0.25) rectangle (3.75,3.75);
  \node [anchor=south] at (current bounding box.north) {$\Maj(\H)$};
  
\end{tikzpicture}
\]

\noindent Next we determine the faces $\epsilon(x)$ for each dot $x \in \cup\H$, maintaining the $3 \times 3$ arrangement from above:
\[
\begin{array}{lll}
     & \{H_1\} & \varnothing \\
  \{H_0\} & \{H_0, H_1\} & \varnothing \\
  \{H_0, H_2\} & \{H_3\} & \varnothing
\end{array}
\]
Below, recalling that the labeling $\lambda$ is just the preimage of $\epsilon$, we depict each set $\lambda(\F)$ by a dot diagram placed at the face $\F \in {\rm skel}_1(\Delta)$.
Note that these faces are either $\varnothing$, vertices, or edges.
If $\lambda(\F) = \varnothing$, then we omit the label on $\F$:
\tdplotsetmaincoords{110}{100} 

\newsavebox\one

\begin{lrbox}{\one}
\begin{tikzpicture}[scale=.15]
  \draw[draw=gray,fill=white] (0.25,0.25) rectangle (3.75,3.75);
  \foreach \x/\y in {2/3} {
    \fill (\x, \y) circle (12pt);
  }
  \end{tikzpicture}
\end{lrbox}

\newsavebox\zero

\begin{lrbox}{\zero}
\begin{tikzpicture}[scale=.15]
  \draw[draw=gray,fill=white] (0.25,0.25) rectangle (3.75,3.75);
  \foreach \x/\y in {1/2} {
    \fill (\x, \y) circle (12pt);
  }
  \end{tikzpicture}
\end{lrbox}

\newsavebox\three

\begin{lrbox}{\three}
\begin{tikzpicture}[scale=.15]
  \draw[draw=gray,fill=white] (0.25,0.25) rectangle (3.75,3.75);
  \foreach \x/\y in {2/1} {
    \fill (\x, \y) circle (12pt);
  }
  \end{tikzpicture}
\end{lrbox}

\newsavebox\zeroone

\begin{lrbox}{\zeroone}
\begin{tikzpicture}[scale=.15]
  \draw[draw=gray,fill=white] (0.25,0.25) rectangle (3.75,3.75);
  \foreach \x/\y in {2/2} {
    \fill (\x, \y) circle (12pt);
  }
  \end{tikzpicture}
\end{lrbox}

\newsavebox\zerotwo

\begin{lrbox}{\zerotwo}
\begin{tikzpicture}[scale=.15]
  \draw[draw=gray,fill=white] (0.25,0.25) rectangle (3.75,3.75);
  \foreach \x/\y in {1/1} {
    \fill (\x, \y) circle (12pt);
  }
  \end{tikzpicture}
\end{lrbox}

\newsavebox\nullset

\begin{lrbox}{\nullset}
\begin{tikzpicture}[scale=.15]
  \draw[draw=gray,fill=white] (0.25,0.25) rectangle (3.75,3.75);
  \foreach \x/\y in {3/1, 3/2, 3/3} {
    \fill (\x, \y) circle (12pt);
  }
  \node [anchor=east] at (current bounding box.west) {$\lambda(\varnothing) = $};
  \end{tikzpicture}
\end{lrbox}
  
\[
\begin{tikzpicture}[tdplot_main_coords, scale=2, thick, label distance=5pt]
  \coordinate (1) at (0,0,0);
  \coordinate (3) at (2,0,0);
  \coordinate (2) at ({2*cos(60)},{2*sin(60)},0);
  \coordinate (0) at ({(2*cos(60))/3},{(2*sin(60))/3},{2*sqrt(6)/3});
  
  \draw (3) -- (1) -- (2);
  \draw[dashed] (3) -- (2);
  \draw (1) -- (0) node [midway] {\usebox\zeroone};
  \draw (3) -- (0);
  \draw (2) -- (0) node [midway] {\usebox\zerotwo};
  
  \node[label={270:{$H_1$}}] at (1) {\usebox\one};
  \node[label={180:{$H_3$}}] at (3) {\usebox\three};
  \node[label={0:{$H_2$}}] at (2) {};
  \node[label={90:{$H_0$}}] at (0) {\usebox\zero};

  \node [anchor=west] at (current bounding box.north east) {\usebox\nullset};
\end{tikzpicture}
\]

\noindent To compute $\Vol(\Delta)$, we depict the labeling $\lambda'$ below.
A copy of each previous label $\lambda(\F)$ is sent to each facet of $\F$.
Note that $\varnothing$ is the only facet of a vertex:
\tdplotsetmaincoords{110}{100} 

\newsavebox\primezero

\begin{lrbox}{\primezero}
\begin{tikzpicture}[scale=.15]
  \draw[draw=gray,fill=white] (0.25,0.25) rectangle (3.75,3.75);
  \foreach \x/\y in {1/1, 2/2} {
    \fill (\x, \y) circle (12pt);
  }
  \end{tikzpicture}
\end{lrbox}

\newsavebox\primeone

\begin{lrbox}{\primeone}
\begin{tikzpicture}[scale=.15]
  \draw[draw=gray,fill=white] (0.25,0.25) rectangle (3.75,3.75);
  \foreach \x/\y in {2/2} {
    \fill (\x, \y) circle (12pt);
  }
  \end{tikzpicture}
\end{lrbox}

\newsavebox\primetwo

\begin{lrbox}{\primetwo}
\begin{tikzpicture}[scale=.15]
  \draw[draw=gray,fill=white] (0.25,0.25) rectangle (3.75,3.75);
  \foreach \x/\y in {1/1} {
    \fill (\x, \y) circle (12pt);
  }
  \end{tikzpicture}
\end{lrbox}

\newsavebox\primenullset

\begin{lrbox}{\primenullset}
\begin{tikzpicture}[scale=.15]
  \draw[draw=gray,fill=white] (0.25,0.25) rectangle (3.75,3.75);
  \foreach \x/\y in {1/2, 2/1, 2/3} {
    \fill (\x, \y) circle (12pt);
  }
  \node [anchor=east] at (current bounding box.west) {$\lambda'(\varnothing) = $};
  \end{tikzpicture}
\end{lrbox}
  
\[
\begin{tikzpicture}[tdplot_main_coords, scale=2, thick, label distance=5pt]
  \coordinate (1) at (0,0,0);
  \coordinate (3) at (2,0,0);
  \coordinate (2) at ({2*cos(60)},{2*sin(60)},0);
  \coordinate (0) at ({(2*cos(60))/3},{(2*sin(60))/3},{2*sqrt(6)/3});
  
  \draw (3) -- (1) -- (2);
  \draw[dashed] (3) -- (2);
  \draw (1) -- (0);
  \draw (3) -- (0);
  \draw (2) -- (0);
  
  \node[label={270:{$H_1$}}] at (1) {\usebox\primeone};
  \node[label={180:{$H_3$}}] at (3) {};
  \node[label={0:{$H_2$}}] at (2) {\usebox\primetwo};
  \node[label={90:{$H_0$}}] at (0) {\usebox\primezero};

  \node [anchor=west] at (current bounding box.north east) {\usebox\primenullset};
\end{tikzpicture}
\]

\noindent This verifies Corollary~\ref{cor:EMD equals volume of Delta}: counting dots above, we have $\Vol(\Delta) \coloneqq \sum_\F |\lambda'(\F)| = 7$, which was the EMD computed in Figure~\ref{fig:example EMC d3}.
To deterine $\Vol_2(\Delta)$, we determine one more labeling $\lambda''$, by iterating the process above, namely, sending one copy of $\lambda'(\F)$ to each facet of $\F$.
Clearly $\lambda''(\F)$ is nonempty only for $\F = \varnothing$, in which case we have the multiset union of the three labels shown above:
\[
    \begin{tikzpicture}[scale=.4]
  \foreach \x/\y in {1/1, 2/2} {
    \node[fill, circle, text = white, inner sep = 1pt] at (\x, \y) {\textbf{2}};
  }
  \draw[gray] (0.25,0.25) rectangle (3.75,3.75);
  \node [anchor=east] at (current bounding box.west) {$\lambda''(\varnothing) = $};
  
\end{tikzpicture}
\]
where each dot occurs with multiplicity 2.
Hence $\Vol_2(\Delta) \coloneqq \sum_\F |\lambda''(\F)| = 4$.

Now to verify Theorem~\ref{thm:CM}, we compute the edge lengths of $\Delta$, which is easily done by inspecting the pairwise symmetric differences of the $H_i$, as in Figure~\ref{fig:example EMD d2}:
\tdplotsetmaincoords{110}{100} 

\[
\begin{tikzpicture}[tdplot_main_coords, scale=2, thick, label distance=5pt]
  \coordinate (1) at (0,0,0);
  \coordinate (3) at (2,0,0);
  \coordinate (2) at ({2*cos(60)},{2*sin(60)},0);
  \coordinate (0) at ({(2*cos(60))/3},{(2*sin(60))/3},{2*sqrt(6)/3});
  
  \draw (3) -- (1) node[midway, left] {3};
  \draw (1) -- (2) node[midway, below] {3};
  \draw[dashed] (3) -- (2) node[midway, above] {2};
  \draw (1) -- (0) node[pos=.6, left] {3};
  \draw (3) -- (0) node[midway, left] {4};
  \draw (2) -- (0) node[midway, right] {2};
  
  \node[label={270:{$H_1$}}] at (1) {};
  \node[label={180:{$H_3$}}] at (3) {};
  \node[label={0:{$H_2$}}] at (2) {};
  \node[label={90:{$H_0$}}] at (0) {};
\end{tikzpicture}
\]

\noindent The sum of the edge lengths is 17.
We therefore have $\frac{1}{d}[\Vol_2(\Delta) + \sum_{\mathcal{E}} \Vol(\mathcal{E})] = \frac{1}{3}(4 + 17) = 7$, which we know to equal $\Vol(\Delta) = \EMD(h_0, \ldots, h_3)$, just as predicted by Theorem~\ref{thm:CM}.

Finally, to verify Theorem~\ref{thm:Vol from SA}, we compute the areas of the four facets of $\Delta$ by inspecting the sizes of the generalized symmetric differences $\blacktriangle$, as in Corollary~\ref{cor:Vol IDs}\eqref{Vol1}:
\begin{align*}
    \Vol(\{H_1, H_2, H_3\}) & = 4,\\
    \Vol(\{H_0, H_2, H_3\}) &= 4,\\
    \Vol(\{H_0, H_1, H_3\}) &= 5,\\
    \Vol(\{H_0, H_1, H_2\}) &= 4.
\end{align*}
Hence $\Delta$ has surface area 17.
Recalling from above that $|\Med(\H)| = 2$, we thus again obtain $\frac{1}{d}[{\rm SA}(\Delta) + \frac{d+1}{2}\cdot |\Med(\H)|] = \frac{1}{3}(17 + 2 \cdot 2) = 7$, agreeing with Theorem~\ref{thm:Vol from SA}.

\begin{rem}
    
    We kept this example three-dimensional in order to include the pictures, but note the price we pay: due to the relationship~\eqref{observation 3 and 2}, since each edge is contained in exactly two facets, the sum of the (2-dimensional) facet areas is the same as the sum of the edge lengths.
    It is therefore much more interesting to contrast the results of Theorems~\ref{thm:CM} and~\ref{thm:Vol from SA} in \emph{more} than three dimensions.

\end{rem}

\section{Concluding remarks and open problems}
\label{sec:Conclusion}

In this paper, we introduced the notion of an EM simplex in order to solve one very specific problem, namely, finding an expression for the generalized EMD in terms of pairwise EMDs.
As we have seen, it turns out that among the generalized volumes $\Vol_i(\Delta)$ with which we endowed an EM simplex, only $\Vol(\Delta) = \Vol_1(\Delta)$ and $\Vol_2(\Delta)$ were required to solve our problem.
It was straightforward to see that $\Vol_0(\Delta)$ is just the size of the union $\cup\X$ of the vertex sets $X_i$, and that $\Vol(\Delta)$ equals the EMD of the histograms corresponding to the vertices.
As a consequence of Theorem~\ref{thm:CM}, we can now interpret $\Vol_2(\Delta)$ as the ``obstruction'' to a perfect generalization of the simple relationship~\eqref{observation 3 and 2} observed previously in dimension 2.

\begin{problem}
    Find an interpretation and/or an application for the higher generalized volumes $\Vol_i(\Delta)$ for $i \geq 3$.
\end{problem}

We would be remiss to conclude without mentioning the field of topological data analysis.
For an EM simplex $\Delta$, it is clear that $\F \subseteq \mathcal{G} \in \Delta$ implies $\Vol(\F) \leq \Vol(\mathcal{G})$.
This is also obvious from the very definition of the generalized EMD, since it is impossible that including more histograms can lessen the work required to equalize them.
Therefore, given an arbitrary number of histograms, the EMD induces a \emph{filtration} on the associated simplicial complex, in the sense of persistent homology: roughly speaking, the EMD functions as a threshold for determining whether a given subset of histograms forms a face.
We are hopeful that this may prove to be a fruitful direction for future research.

\bibliographystyle{alpha}
\bibliography{references}

@article{Bein,
author={W. Bein and P. Brucker and
J. Park and P. Pathak},
title={A {M}onge property for the $d$-dimensional transport problem},
journal={Discrete Appl. Math.},
volume={58},
year={1995}, 
number={2},
pages={97--109},
}

@article{Bickel,
  title={The earth mover's distance is the {M}allows distance: some insights from statistics},
  author={P. J. Bickel and E. Levina},
  journal={Proceedings Eighth IEEE International Conference on Computer Vision. ICCV 2001},
  year={2001},
  volume={2},
  pages={251--256}
}

@article {Erickson20,
    AUTHOR = {Erickson, W.},
     TITLE = {A generalization for the expected value of the earth mover's
              distance},
   JOURNAL = {Algebr. Stat.},
  FJOURNAL = {Algebraic Statistics},
    VOLUME = {12},
      YEAR = {2021},
    NUMBER = {2},
     PAGES = {139--166},
      ISSN = {2693-2997},
   MRCLASS = {05E14 (05E40 13P25)},
  MRNUMBER = {4350874},
       DOI = {10.2140/astat.2021.12.139},
       URL = {https://doi.org/10.2140/astat.2021.12.139},
}

@misc{Erickson23,
      title={Comparing weighted difference and earth mover's distance via {Y}oung diagrams}, 
      author={W. Erickson},
      year={2023},
      note={arXiv:2104.07273},
      archivePrefix={arXiv},
      primaryClass={math.CO}
}

@article{Kline,
author={J. Kline},
title={Properties of the $d$-dimensional earth mover's problem},
journal={Discrete Appl. Math.},
volume={265},
year={2019},
pages={128--141},
}

@article{Rabin,
author = {Rabin, J. and Delon, J. and Gousseau, Y.},
title = {Transportation distances on the circle}, 
journal = {J. Math. Imaging Vision},
volume = {41},
year = {2011},
number = {1-2},
pages = {147–167}
}

@book{Villani,
author = {C. Villani},
title = {Optimal transport: old and new},
series = {Fundamental Principles of Mathematical Sciences},
volume = {338},
publisher = {Springer--Verlag},
city = {Berlin},
year = {2009}
}

\end{document}